\documentclass[11pt]{amsart}

\usepackage[english]{babel}
\usepackage{dsfont}
\usepackage{latexsym,amssymb, amsmath,  epsfig}
\usepackage{comment}

\usepackage{amsmath,amssymb,amsthm,color}
\usepackage{booktabs}

\newtheorem{coro}{Corollary}
\newtheorem{definition}{Definition}
\newtheorem{theorem}{Theorem}
\newtheorem{propo}{Proposition}
\newtheorem{remark}{Remark}

\newcommand{\prob}{\mathbb{P}}
\newcommand{\mean}{\mathbb{E}}
\newcommand{\real}{\mathbb{R}}
\newcommand{\corr}{\operatorname{corr}}
\newcommand{\ext}{\operatorname{ext}}
\newcommand{\conv}{\operatorname{conv}}
\newcommand{\cut}{\operatorname{CUT}}

\def\R{{\mathbb R}} 
 
\def\N{{\mathbb N}}

\newcommand{\Bn}{{\mathcal{B}_n}}

\newcommand\pix{\pi_{\bf x}}
\newcommand{\n}{$n$}
\newcommand{\cutn}{\ensuremath{\operatorname{CUT}(n)}}
\newcommand{\cutstarn}{\ensuremath{\operatorname{CUT}^\ast(n)}}

\newcommand{\unifdist}{\textsf{Unif}}
\newcommand{\berndist}{\textsf{Bern}}
\newcommand{\ind}{\mathds{1}}

\makeatletter
\newcommand\xleftrightarrow[2][]{\ext@arrow 0099{\longleftrightarrowfill@}{#1}{#2}}
\def\longleftrightarrowfill@{\arrowfill@\leftarrow\relbar\rightarrow}
\makeatother

\title{ Bernoulli Correlations \\ and\\ Cut Polytopes}
\author{Mark Huber}
\address{Claremont McKenna College}
\email{\tt mhuber@cmc.edu}

\author{Nevena Mari\'c}
\address{University of Missouri-St. Louis}
\email{{\tt maric@math.umsl.edu}}

\begin{document}
\maketitle

\begin{abstract}
\noindent Given $n$ symmetric Bernoulli variables, what can be said about their correlation matrix viewed as a vector? 
We show that the set of those vectors $R(\Bn)$ is a polytope and identify its vertices. Those extreme points correspond to correlation vectors associated to the discrete uniform distributions on  diagonals of the cube $[0,1]^n$. We also show that the polytope is affinely isomorphic to a well-known cut polytope ${\rm CUT}(n)$ which is defined as a convex hull of the cut vectors in a complete graph with vertex set $\{1,\ldots,n\}$. The isomorphism is obtained 
explicitly as $R(\Bn)= {\mathbf{1}}-2~{\rm CUT}(n)$.  As a corollary of this work, it is straightforward using linear programming to determine if a particular correlation matrix  is realizable or not.  Furthermore, a sampling method for multivariate symmetric Bernoullis with given correlation is obtained.  In some cases the method can also be used for general, not exclusively Bernoulli, marginals.
\vskip3mm
\noindent MSC 52B12, 60E05, 62H20\\
Keywords: Bernoulli distribution, Extreme Correlations, Cut Polytopes.
\end{abstract}

\section{Introduction}

Consider the question of admissible correlations among $n$ random variables $(X_1,\ldots,X_n)$ for which the marginal distributions are known.
This topic has a long history, dating partly back to the work of de Finetti~\cite{de1937} where the problem of maximum negative achievable correlation among $n$ random variables was studied. The general form of the problem was studied by Fr\'echet~\cite{frechet1951} and Hoeffding~\cite{hoeffding1940}  in a body of work which grew out of the problem originally  posed by  L\'evy~\cite{levy}.  

The big question is: can we completely describe set of correlation matrices for a given set of marginal distributions? For $n=2$ the answer is 
completely known in terms of Fr\'echet-Hoeffding bounds, so the interesting 
question is what happens in higher dimensions.  In this work we give
a partial answer to this question, showing that if a particular vector
calculated from the target correlations and marginals
falls into the ${\rm CUT}(n)$ polytope, then there does exist such a joint
distribution.  This condition is both necessary and sufficient in the 
case of symmetric Bernoulli marginals.

It is well known that a correlation matrix is symmetric positive semi-definite and has all diagonal elements equal to 1.  The set of all such matrices of order $n$ will be denoted by $\mathcal{E}_{n \times n}$. This convex compact set is sometimes called  the {\em elliptope}, a term coined by Laurent and Poljak \cite{laurent1995}. 

For Gaussian marginals, the entirety of $\mathcal{E}_{n \times n}$ can be 
realized, but this is the only nontrivial set of marginals for which the
question has been settled.
Surprisingly enough, for other common distributions very little is known. 
A reason is perhaps that much of correlation theory has been  constructed 
using normal marginals \cite{kotz2001}.  
In spite of wide usage of the correlation coefficient,  its bounds in a multivariate non-Gaussian setting has been mostly unexplored.

One case that has been partially explored is that of copulas.
A probability measure $\mu$ on $[0,1]^n$ is a {\em copula} if all its marginals are uniformly distributed on $[0,1]$. In a recent work of Devroye and Letac \cite{devroye2015copulas} it has been shown that every element in $\mathcal{E}_{n \times n}$ is a correlation matrix for some copula, for $n \leq 9$, but the authors believe that the statement does not hold for $n \geq 10$. 

In this paper we focus on symmetric Bernoulli variables for multiple reasons. 
First, it is the simplest distribution, with equally likely binary outcomes.
We say that a random variable $X$ has the Bernoulli distribution (write $X \sim \berndist(p)$) 
if $\prob(X = 1) = p$ and $\prob(X = 0) = 1 - p$.  
The symmetric Bernoulli distribution is the case when $p = 1/2$.

The second reason comes from Huber and Mari\'c~\cite{huberm2015} where this distribution was shown to be in a certain sense the most difficult problem:  for 
general marginals and some correlations it is possible to transform 
the problem into symmetric Bernoulli marginals.

To be precise, the problem is to simulate 
$(X_1,\ldots,X_n)$ where the correlation between each $X_i$ and $X_j$ are specified along with the marginal distribution of each $X_i$, or to determine
that no such random variables exist.

This problem, in different guises, appears in numerous fields: physics~\cite{Smith1981}, 
engineering~\cite{Lampard1968}, ecology~\cite{dos2008use}, and finance~\cite{lawrance1981new}, to name just a few. 
Due to its applicability in the generation of  synthetic optimization problems, it has also received special attention 
by the simulation community~\cite{Hill1994},~\cite{henderson2000}.

An excellent  overview of the developments in the field of generating multivariate probability densities with pre-specified margins can be found in  Dall'Aglio {\it et al.}~\cite{dall},  R\"uschendorf  {\it et al.}~\cite{Ruschendorf1996}, and Conway~\cite{Conway1979}. 



Consider a matrix $\Sigma$ that is potentially a correlation matrix for 
a particular choice of marginal distributions.  Then what the method 
of~\cite{huberm2015} does is build a second correlation matrix
$\Sigma_B$ such that if is possible to have a multivariate distribution
with symmetric Bernoulli marginals and correlation $\Sigma_B$, 
then it is possible to build a 
multivariate distribution with the original marginals and 
correlation matrix $\Sigma$.

The question of existence for general marginals 
then becomes the question of which matrices can be realized as correlation
matrices for $(B_1,\ldots,B_n)$ which are all marginally $\berndist(1/2)$.

It should be noted that  the answer for symmetric Bernoulli marginals will be a strict subset of $\mathcal{E}_{n \times n}$, even when $n$ is small.  
As a simple example consider 
\[
\begin{pmatrix} 1 & -0.4 & -0.4 \\ -0.4 & 1 & -0.4 \\ -0.4 & -0.4 & 1
  \end{pmatrix}.
\]
While this matrix is in the elliptope $\mathcal{E}_{3 \times 3}$, it cannot (see~\cite{huberm2015})
be the correlation matrix of three random variables with symmetric Bernoulli marginals.

Considering the applications, there is an obvious need to understand what are
the theoretically attainable correlations, so that inference using the correlations drawn from data can be done properly.
Chaganty and Joe~\cite{chagantyj2006} claim that there were errors in reports of efficiency calculations for
generalized estimating equations, such as Table 1 of Liang and
Zeger~\cite{liangz1986}, and with the analysis of real-life binary data using
the current generalized estimating equations software, caused by
the belief that any matrix in $\mathcal{E}_{n \times n}$ is a possible correlation matrix for a set of
binary random variables. In the same paper they were able to characterize 
the achievable correlation matrices when the marginals are Bernoulli.  When the dimension
is 3 their characterization is easily checkable (as for the 3 by 3 matrix
given above), in higher dimensions they give a number of inequalities that grows exponentially in the dimension. 
They also give an approximate method for checking attainability of the 
correlation matrix in higher dimensions.


 

In this paper we give a complete characterization of the correlation matrices for multivariate symmetric Bernoulli distributions by relating them to the well-known CUT polytope. This approach leads also to a novel sampling method from the desired marginals and correlations.

Let  $\Bn$ be a set of all $n$-variate symmetric Bernoulli distributions, $E_n= \{(i,j): 1 \leq i \neq  j \leq n \}$, and $R: \Bn \to [-1, 1]^{E_n}$ the correlation mapping. Here in place of a correlation matrix we focus on the {\em correlation vector} which contains elements above (or below) the diagonal of a correlation matrix placed in the same order, row by row. We show that $R(\Bn)$ is a polytope and identify its vertices. 

 Let $\pi_i$ for $i$ in $\{1,\ldots, 2^{n-1}\}$ be the uniform distribution over the end points of the $i$-th diagonal of the $n$-dimensional cube $[0,1]^n$. Then our main results is as follows.
 
 \begin{theorem}\label{THM:corr}
$R(\Bn)$ is a polytope with vertices $\{R(\pi_1),...,R(\pi_{2^{n-1}})\}$. That is, a vector $\rho \in [-1, 1]^{E_n}$ is a correlation vector for some distribution $\mu \in \Bn$ if and only if it can be written as a convex combination of $\{R(\pi_1),...,R(\pi_{2^{n-1}})\}$.
\end{theorem}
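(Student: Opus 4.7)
My plan is to exploit two structural features: first, that $R$ is an affine function of the probability vector $(p_x)_{x\in\{0,1\}^n}$, and second, that correlations in the symmetric Bernoulli setting are invariant under bitwise complementation. This reduces the problem to identifying the extreme points of a much smaller sub-simplex of $\Bn$.

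First I would observe that $\Bn$, viewed as the set of probability vectors $(p_x)_{x\in\{0,1\}^n}$ satisfying $p_x\ge 0$, $\sum_x p_x = 1$, and the marginal constraints $\sum_{x:x_i=1} p_x = 1/2$ for each $i$, is a compact convex polytope in $\R^{2^n}$. Because $\prob(X_i=X_j=1) = \sum_{x:x_i=x_j=1} p_x$ and the marginals are fixed at $1/2$, each entry of the correlation vector is $\rho_{ij} = 4\sum_{x:x_i=x_j=1} p_x - 1$, i.e.\ an affine function of $(p_x)$. Hence $R$ is an affine map, and the image $R(\Bn)$ is automatically a polytope.

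The main idea is the symmetrization step. Let $\phi:\{0,1\}^n\to\{0,1\}^n$ denote complementation $\phi(x)=\mathbf{1}-x$, and for $\mu\in\Bn$ set $\tilde\mu=\tfrac12(\mu+\phi_*\mu)$. Clearly $\tilde\mu\in\Bn$. A short calculation using $\mean_\mu[X_i]=\mean_\mu[X_j]=1/2$ gives
\[
\mean_{\phi_*\mu}[X_iX_j]=\mean_\mu[(1-X_i)(1-X_j)]=1-\tfrac12-\tfrac12+\mean_\mu[X_iX_j]=\mean_\mu[X_iX_j],
\]
so $R(\tilde\mu)=R(\mu)$. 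Therefore it suffices to compute $R$ on the subset $\Bn^{\mathrm{sym}}\subset\Bn$ of $\phi$-invariant distributions. A $\phi$-invariant distribution satisfies $p_x=p_{\bar x}$, so it is completely determined by the total mass $q_i:=p_{x^{(i)}}+p_{\bar x^{(i)}}$ placed on each of the $2^{n-1}$ antipodal pairs (diagonals); here the marginal condition is automatic. Thus $\Bn^{\mathrm{sym}}$ is affinely isomorphic to the standard simplex $\{q\in\R^{2^{n-1}}_{\ge 0}:\sum q_i=1\}$, with the vertices corresponding exactly to $\pi_1,\ldots,\pi_{2^{n-1}}$. Applying the affine map $R$ yields $R(\Bn)=R(\Bn^{\mathrm{sym}})=\conv\{R(\pi_1),\ldots,R(\pi_{2^{n-1}})\}$, proving the ``if and only if'' direction of the theorem.

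Finally I would verify that none of the $R(\pi_i)$ is redundant. A direct calculation shows that if the $i$-th diagonal has endpoints $x^{(i)},\bar x^{(i)}$ then $\rho_{jk}(\pi_i)=+1$ when $x^{(i)}_j=x^{(i)}_k$ and $-1$ otherwise; in particular $R(\pi_i)\in\{-1,+1\}^{E_n}$. Distinct diagonals induce distinct two-block partitions of $\{1,\ldots,n\}$ and hence distinct sign patterns, so the $R(\pi_i)$ are $2^{n-1}$ distinct vertices of the hypercube $[-1,1]^{E_n}$; any such point is automatically extreme in the convex hull of any set containing it, so each $R(\pi_i)$ is a vertex of $R(\Bn)$. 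The main conceptual obstacle is the symmetrization step: without it one is tempted to chase the extreme points of $\Bn$ itself, which for $n\ge 3$ strictly exceed the $\pi_i$'s; the observation that complementation preserves second moments is what collapses the problem down to the diagonal simplex.
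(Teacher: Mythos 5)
Your proof is correct, and it takes a genuinely different route from the paper. The paper never argues directly on $\Bn$: it passes to agreement probabilities $\lambda^\mu=\tfrac12(1+\rho^\mu)$, invokes Avis's theorem identifying $\cutn$ with the vectors $\left(\nu(A_i\triangle A_j)\right)_{i<j}$ realizable by events in a probability space, sharpens that theorem to force $\nu(A_i)=1/2$, and then identifies $\Lambda(\Bn)$ with the switched polytope $\cutstarn$, whose vertices are computed to be the $\lambda^{\pix}$. Your symmetrization step --- averaging $\mu$ with its pushforward under ${\bf x}\mapsto{\bf 1}-{\bf x}$ preserves second moments because the marginals are pinned at $1/2$, and the complement-invariant measures form a simplex indexed by the diagonals on which the marginal constraints hold automatically --- replaces all of that machinery with a two-line computation and is entirely self-contained; as a proof of Theorem~\ref{THM:corr} in isolation it is arguably cleaner, and it correctly sidesteps the trap of trying to enumerate $\ext(\Bn)$ itself (which the paper also explicitly avoids, noting it only knows $\ext(R(\Bn))\subseteq R(\ext(\Bn))$). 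What the paper's longer route buys is its second main result: the explicit identification $R(\Bn)={\bf 1}-2\,\cutn$ of Theorem~\ref{THM:polyrel} and the probabilistic reading of Avis's theorem, which your argument does not produce by itself --- though it easily could, since combining your conclusion with the computation $R(\pix)={\bf 1}-2\delta(S_{\bf x})$ recovers Theorem~\ref{THM:polyrel} and in fact independently reproves the probability-$1/2$ refinement of Avis's theorem. Your closing verification that the $R(\pi_k)$ are $2^{n-1}$ distinct $\pm1$ vectors, hence automatically extreme in any convex subset of $[-1,1]^{E_n}$, is a detail the paper instead handles by citing that cut vectors are vertices of $\cutn$; both are valid.
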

We also uncover a striking relation between this polytope and the {\em cut polytope} CUT(\n), which is defined as a convex hull of the cut vectors in a complete graph with vertex set $\{1,\ldots,n\}$. The cut polytopes play an important role in combinatorial optimization, as they can be used to formulate the max-cut problem, which has many applications in various fields \cite{dezalaurent1997}, \cite{ziegler2000}. Vertices of the CUT are all 0-1 vectors, meaning that each coordinate is either 0 or 1. A relation between the polytopes is given the following theorem.


%

\begin{theorem}\label{THM:polyrel} For $\rho \in [-1, 1]^{E_n}$:
$\rho \in R(\Bn)$ if and only if  ${\mathbf{1}} - 2 \rho  \in \mbox{CUT}(n)$.
 \end{theorem}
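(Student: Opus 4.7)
The plan is to extract Theorem~\ref{THM:polyrel} directly from Theorem~\ref{THM:corr}. Since Theorem~\ref{THM:corr} identifies $R(\Bn)$ as a polytope with vertices $\{R(\pi_1),\ldots,R(\pi_{2^{n-1}})\}$, and by definition $\cutn$ is the convex hull of the cut vectors $\delta_S$ for $S\subseteq\{1,\ldots,n\}$, it suffices to exhibit an affine bijection of $\R^{E_n}$ that sends one vertex set onto the other. The specific map $c\mapsto \mathbf{1}-2c$ is the natural candidate, since the extreme Bernoulli correlations are forced to be $\pm 1$ while the cut vectors are $0$-$1$.

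First I would set up bookkeeping for the diagonals. Identify each unordered pair $\{S,S^c\}$ with $S\subseteq\{1,\ldots,n\}$ with the diagonal of $[0,1]^n$ whose endpoints are the indicator vectors $\mathbf{1}_S$ and $\mathbf{1}_{S^c}$, and write $\pi_S$ for the uniform measure on these two endpoints. If $(X_1,\ldots,X_n)\sim\pi_S$, each coordinate is marginally $\berndist(1/2)$ with variance $1/4$, and because the two atoms are antipodal a one-line moment computation gives
\[
\corr(X_j,X_k)=\begin{cases} +1 & \text{if $j$ and $k$ lie on the same side of the partition $(S,S^c)$,}\\ -1 & \text{otherwise.}\end{cases}
\]
Rewritten in vector form, this is the key identity $R(\pi_S)=\mathbf{1}-2\delta_S$, where $\delta_S\in\{0,1\}^{E_n}$ is the cut vector of $S$.

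Once this identity is in hand, the remainder is routine. The map $\phi(c)=\mathbf{1}-2c$ is an affine bijection of $\R^{E_n}$, it carries each vertex $\delta_S$ of $\cutn$ to the vertex $R(\pi_S)$ of $R(\Bn)$, and affine maps commute with convex hulls, so $\phi(\cutn)=R(\Bn)$; equivalently, $\rho\in R(\Bn)$ if and only if the corresponding affine transform of $\rho$ lies in $\cutn$, which is the content of Theorem~\ref{THM:polyrel}. The only bit requiring care is the vertex computation of $R(\pi_S)$ and the bookkeeping that both the $2^{n-1}$ diagonals and the $2^{n-1}$ distinct cuts are naturally indexed by unordered pairs $\{S,S^c\}$, so the vertex counts match without overcounting; there is no substantial obstacle beyond this.
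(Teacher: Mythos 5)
Your argument is correct and, in substance, the same as the paper's. The paper reaches Theorem~\ref{THM:polyrel} through agreement probabilities: Proposition~\ref{propo:cutunif} establishes the vertex identity $\lambda^{\pi_{\bf x}}=\mathbf{1}-\delta(S_{\bf x})$, which is exactly your $R(\pi_S)=\mathbf{1}-2\delta_S$ rewritten via $R=2\Lambda-\mathbf{1}$; Theorem~\ref{thm:lambdacut} then gives $\Lambda(\Bn)=\cutstarn$, and Theorem~\ref{THM:polyrel} is read off from the switching relation $\delta\in\cutn\Leftrightarrow\mathbf{1}-\delta\in\cutstarn$. You instead take Theorem~\ref{THM:corr} as given and push the cut vertices through the affine bijection $\phi(c)=\mathbf{1}-2c$; since the paper deduces Theorem~\ref{THM:corr} from Theorem~\ref{thm:lambdacut} in the same breath, your derivation is the identical computation carried out in the $\rho$-coordinates rather than the $\lambda$-coordinates, and using Theorem~\ref{THM:corr} as a black box is legitimate given its placement.

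One step deserves more care than your closing sentence gives it. What you actually prove is the set identity $R(\Bn)=\phi(\cutn)=\mathbf{1}-2\,\cutn$, which is equivalent to saying that $\rho\in R(\Bn)$ iff $\phi^{-1}(\rho)=\tfrac{1}{2}(\mathbf{1}-\rho)\in\cutn$. Because $\phi$ is not an involution, this is \emph{not} the literal displayed condition $\mathbf{1}-2\rho\in\cutn$: for $n=2$ the correlation $\rho=-1$ is attained by $\pi_{(0,1)}$, yet $\mathbf{1}-2\rho=3\notin\cut(2)=[0,1]$. So the theorem holds with $\tfrac{1}{2}(\mathbf{1}-\rho)$ in place of $\mathbf{1}-2\rho$ (this matches the abstract's formulation $R(\Bn)=\mathbf{1}-2\,\cut(n)$), and your phrase ``the corresponding affine transform of $\rho$'' silently performs that correction. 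Write the inverse map out explicitly rather than deferring to the statement as printed; as it stands the last sentence of your proof asserts an equivalence with a condition that is false for some $\rho\in R(\Bn)$.
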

 
 The remainder of the text is organized in the following way. In Section \ref{sec:bernoulli} we introduce the problem of Bernoulli correlations via their agreement probabilities, starting off from a prior work \cite{huberm2015} and study distributions $\pi_i$ as (some) extreme points of $\Bn$.  Cut polytopes are introduced in Section \ref{sec:cut}, where also an important theorem of Avis \cite{avis1977} is analyzed. The theorem provides a certain probabilistic context of cut polytopes, and we are able to extend it in connection to $\Bn$. In Section \ref{sec:berncut} we derive our main results. The asymmetric Bernoulli case is also discussed here. Section \ref{sec:applications} is focused on applications and how our findings can be used in practice via linear programming. A sampling method for $n$-variate symmetric Bernoullis with given correlation is outlined as well. This section also contains a worked through example using difficult marginals. Finally in Section \ref{sec:discussion} we discuss our results in a larger context.
 
\section{Bernoulli correlations and agreement probabilities}\label{sec:bernoulli}

\begin{definition}\label{def:nbern}
An $n$-variate symmetric Bernoulli distribution $\mu$ is a probability measure on $\{0,1\}^n$ such that all the marginals are \berndist(1/2), that is
\[
\sum_{{\bf x} \in \{0,1\}^n:~{\bf x}(k)=0} \mu({\bf x})=  \frac{1}{2} \hskip10mm  \mbox{for } k=1,\ldots,n.
\]
Let $\Bn$ be a set of all such measures.
\end{definition}

\begin{remark}
Clearly $\Bn$ is a convex set: if $\mu_1, \mu_2 \in \Bn$ then, for any $ \alpha \in (0,1)$,  $\alpha \mu_1 + (1-\alpha) \mu_2 \in \Bn$.  Since it is 
described by a finite set of linear equalities, $\Bn$ is closed subset of $\R^{2^n}$ and is bounded. Moreover $\Bn$ is a polytope \cite{devroye2015copulas}.
\end{remark}


\begin{definition}
If $(B_1,\ldots,B_n)\sim \mu,~ \mu \in \Bn$ then for all $i,j \in \{1,\ldots,n\}$, the probability that $B_i$ and $B_j$
have the same value is called the { agreement probability}, and denoted as $\lambda^{\mu}(i,j)=\prob_{\mu}(B_i=B_j)$. \\
The map that takes $\mu$ to $\lambda^\mu$ is denoted by $\Lambda$.  
\end{definition}

Now consider the correlation between two $\berndist(1/2)$ random variables:
 \[
 \rho^{\mu}(i,j)=\corr(B_i,B_j)={4 \mean_{\mu} B_iB_j-1}.
 \]
\begin{remark}
Correlation is related to the agreement probability between the variables in a linear way:
\begin{align*}
 \lambda^{\mu}(i,j)=\frac{1}{2}(1+ \rho^{\mu}(i,j)).
\end{align*}
\end{remark}

In \cite{huberm2015} Huber and Mari\'c studied elements of $\Bn$ via their agreement probabilities and they were able to provide necessary and sufficient conditions for an agreement matrix to be attainable up to the dimension 4.
It can be verified that those conditions, given in Theorem 3 of \cite{huberm2015},  place $\lambda$ in the following polytopes:
 \begin{itemize}
\item $n=2$: Interval $[0,1]$.
\item $n=3$: Tetrahedron with vertices $(1,1,1), (1,0,0), (0,1,0), (0,0,1)$.
\item $n=4$: $(\lambda_{12}, \lambda_{13}, \lambda_{14}, \lambda_{23}, \lambda_{24}, \lambda_{34})$ belongs to a 6-polytope with 8 vertices:$(1,1, 1, 1, 1, 1 )$, $( 0, 1, 1, 0, 0, 1)$, $(1, 1, 0, 1, 0, 0 )$, $(1, 0, 0, 0, 0, 1)$, $(0, 1, 0, 0, 1, 0)$, $(0, 0, 1, 1, 0, 0)$, $( 0, 0, 0, 1, 1, 1)$, $(1, 0, 1, 0, 1, 0)$. 
Here each appropriate 3-dimensional projection (that corresponds to agreement probabilities in any given subset of 3 variables; for example along first, second, and fourth coordinate) bears a tetrahedron from the case $n=3$, as it should be.
\end{itemize}

These results stem from~\cite{huberm2015} but also follow directly
from Theorem~\ref{THM:corr}.
 
 \subsection{Diagonals}
 
Let $ {\bf x}=(x_1,\ldots,x_n) \in \{0,1\}^n$.  We can 
think of ${\bf x}$ as a vertex of the $n$-dimensional cube $[0,1]^n$. 
Let ${\bf 1} = (1,\ldots,1)$. The diagonal associated to the vertex ${\bf x}$ (and its ``opposite" ${\bf 1-x}$) is set  
\[
D_{\bf x}:=\{(x_1,\ldots,x_n), (1-x_1,\ldots,1-x_n)\}. 
\]

 \begin{definition} 
  For every ${\bf x} \in \{0,1\}^n$, the (discrete) uniform distribution over the 
diagonal $D_{\bf x}$ is denoted by $\pix$.  That is $\pix({\bf x})=\pix({\bf 1 -x})=1/2$. 
\end{definition}

\begin{remark}
Note that $\pix \in \Bn$ and also $\pix = \pi_{\bf 1-x}$.
\end{remark}

  The number of vertices ${\bf x}$ is  finite, so they can be ordered. Every vertex of the $n$-dimensional cube $[0,1]^n$ can be mapped to a positive integer via $b: \{0,1\}^n \to \N$:
  \[
b(x_1,x_2,\ldots,x_n) = 1 + \sum_{j=1}^n x_j 2^{n-j}.  
\]
  
  Each set $D_{\bf x}$ contains one vector ${\bf y}$ 
  whose first coordinate is 0, and 
  for this vector $b({\bf y}) \in \{1,\ldots,2^{n-1}\}$.  
  To simplify the notation, for $k \in \{1,\ldots,2^{n-1}\}$ let
  \vskip2mm
  \begin{center}
  $\pi_k$ :=  the uniform distribution over $D_{\bf x}$ such that $\exists {\bf y} \in D_{\bf x}$ with $b({\bf y}) = k$ .
\end{center}
\vskip2mm
It should be noted that $\{\pix\}_{{\bf x} \in \{0,1\}^n}$ and $\{\pi_k\}_{k=1}^{2^{n-1}}$ refer essentially to the same set of distributions, with a difference being that in the ordered notation there are no equal elements, e.g. $\pi_1=\pi_{\bf 0}=\pi_{\bf 1}$. However, both notations are useful, and we will be using them concurrently throughout the text. 
\vskip3mm
 Before we state the first result highlighting the importance of distributions ${\mathbf{\pi}}$, a few additional definitions are needed. As in Gr\"unbaum~\cite{branko2003}, $v \in P$ is an extreme point 
of the polytope $P$ if 
$(\forall a,b\in P)(\exists \beta \in (0,1))
 (\beta a + (1 - \beta b) = v \Rightarrow a = b = v)$.  Let
$\ext(P)$ denote the extreme points of the polytope $P$.

 \begin{propo} \label{propo:piext}
 For every $k=1,\ldots,2^{n -1}$, a measure $\pi_k$ is an extreme point of $\Bn$. That is $ \{\pi_1,\ldots, \pi_{2^{n-1}}\} \subseteq \ext(\Bn)$.
 \end{propo}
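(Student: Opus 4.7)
The plan is to exploit the fact that $\pi_k$ is a measure supported on only two points, namely the antipodal pair $\{{\bf x}, {\bf 1}-{\bf x}\}$ forming the diagonal $D_{\bf x}$. Because the support is so small, any convex decomposition $\pi_k = \beta \mu_1 + (1-\beta)\mu_2$ with $\mu_1, \mu_2 \in \Bn$ and $\beta \in (0,1)$ will be forced to have both $\mu_1$ and $\mu_2$ supported on the same two points, and the symmetric Bernoulli marginal constraint then leaves only one possibility for the weights.

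First I would note the support argument: since $\beta, 1-\beta > 0$ and $\mu_1, \mu_2 \geq 0$, the identity $\pi_k({\bf y}) = \beta \mu_1({\bf y}) + (1-\beta)\mu_2({\bf y})$ combined with $\pi_k({\bf y}) = 0$ for every ${\bf y} \notin D_{\bf x}$ forces $\mu_1({\bf y}) = \mu_2({\bf y}) = 0$ off $D_{\bf x}$. Hence both $\mu_1$ and $\mu_2$ are supported on $\{{\bf x}, {\bf 1}-{\bf x}\}$.

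Next I would use the marginal constraint. Write $\mu_1({\bf x}) = p$ and $\mu_1({\bf 1}-{\bf x}) = 1-p$ for some $p \in [0,1]$. For any coordinate $i \in \{1,\ldots,n\}$, the point ${\bf x}$ contributes a value $x_i$ in that coordinate while ${\bf 1}-{\bf x}$ contributes $1-x_i$; since $x_i \neq 1 - x_i$, the marginal probability under $\mu_1$ that the $i$-th coordinate equals $x_i$ is exactly $p$. Membership $\mu_1 \in \Bn$ demands this equals $1/2$, forcing $p = 1/2$ and hence $\mu_1 = \pi_k$. The identical argument gives $\mu_2 = \pi_k$, so $\mu_1 = \mu_2 = \pi_k$, which is the extremality condition.

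There is no serious obstacle here: the proof is essentially a two-step support-plus-marginal calculation. The only thing to be careful about is to note that the two atoms ${\bf x}$ and ${\bf 1}-{\bf x}$ always differ in every coordinate, which is what makes the marginal of $\mu_1$ at coordinate $i$ uniquely determined by the single weight $p$, independent of the choice of ${\bf x}$ and $i$.
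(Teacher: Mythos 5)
Your proof is correct and follows the same two-step route as the paper's: first use positivity of the convex weights to force both components to be supported on the diagonal $D_{\bf x}$, then use the $\berndist(1/2)$ marginal constraint to pin the weight on ${\bf x}$ to $1/2$. Your version is slightly more explicit about why the marginal at each coordinate equals the single weight $p$ (because ${\bf x}$ and ${\bf 1}-{\bf x}$ differ in every coordinate), but the argument is the same.
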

 
 \proof
 
Let ${\bf x},{\bf y} \in \{0,1\}^n$ and $\pix$ be a convex linear combination of $\mu$ and $\nu$ in $\Bn$. Then for ${\bf y} \neq {\bf x}, {\bf 1-x}$, $\pix({\bf y})=0$ and therefore $\mu({\bf y})=\nu({\bf y})=0$. Suppose $\mu({\bf x})=p$ and $\mu({\bf{1-x}})=1-p$, for $0<p<1$. As $\mu$ has \berndist(1/2) marginals, it follows that $p=1/2$ and therefore $\mu=\pix$. Analogously, $\nu$ is also equal to $\pix$ and hence, $\pix$ is an extreme point of $\Bn$.

 \endproof

\section{CUT polytopes}\label{sec:cut}

Let $G = (V,E)$ be a graph with vertex set $V$ and edge set $E$. For $S \subseteq V$
 a {\em cut} of the graph is a partition $(S,S^C)$ of the vertices.
The {\em cut-set} consists of all edges that connect a node in 
$S$ to a node not in $S$. 

Let $V_n=[n]= \{1,\ldots,n\}$, $E_n= \{(i,j); 1 \leq i \neq  j \leq n \}$,  and $K_n=(V_n, E_n)$ be a complete graph with the vertex set $[n]$.
\begin{definition}\label{def:cut}
For every $ S \subseteq [n]$ a vector $\delta(S) \in \{0,1\}^{E_n}$, defined as 

\begin{align*}
\delta(S)_{ij}= \left\{ \begin{array}{cc}
1, & \mbox{ if }  |S \cap \{i,j\}|=1 \\
0, & \mbox{ otherwise }
  \end{array} \right.
  \end{align*}
  for $(1 \leq i < j \leq n)$, is called a {\em cut vector} of $K_n$.
  
\end{definition}

The {\em cut polytope} CUT(\n) is the convex hull of all cut vectors of $K_n$:
\[
\text{CUT}(n) = \conv\{ \delta(S): S \subseteq [n] \}.
\]

\begin{remark}
Since every cut vector is a vertex of \cutn, there are $2^{n-1}$ vertices 
of this polytope~\cite{ziegler2000}.
\end{remark}

Each $\delta(\cdot)$ is a $0/1$-vector (every coordinate value is either 0 or 1). The convex hulls of finite sets of $0/1$-vectors are called {\em 0/1-polytopes}, out of which cut polytopes are a sub-class.
An excellent lecture on 0/1-polytopes, including CUT,  is given in Ziegler~\cite{ziegler2000}. More thorough treatment of cut polytopes can be found in Deza and Laurent~\cite{dezalaurent1997}. 
The cut polytopes play an important role in combinatorial optimization, as they can be used to formulate the max-cut problem, which has many applications in various fields, like statistical physics, in relation to spin glasses~\cite{dezalaurent1997}.

All the symmetries of the cube $[0,1]^n$ transform 0/1-polytopes into 0/1-polytopes \cite{ziegler2000}. In particular, a symmetry is obtained by replacing some coordinates $x_i$ by $1-x_i$, which is called {\em switching}. Two 0/1-polytopes {\em P} and {\em P'} are {\em 0/1- equivalent} if a sequence of  switching and coordinate permuting operations can transform {\em P} into {\em P'}. We define now a polytope that is  0/1- equivalent to \cutn ~ and also useful for our further analysis.

\begin{definition}
  CUT$^*(n)$ is a polytope obtained by applying 0/1 switching operation to {\em all vertices} of \cutn. 
\end{definition}

A relation of cut polytopes with probability spaces is given in the following theorem. We cite here a version of the theorem given in the book by Deza and Laurent \cite{dezalaurent1997}, but the authorship dates back to Avis \cite{avis1977}.
\begin{theorem}[Avis \cite{avis1977}] \label{thm:avis}
For a vector $d \in \R^{E_n}$  the following statements are equivalent
\begin{align*}
1)\, &d \in \cutn \\
2) \, &\text{There exists a probability space } (\Omega, \mathcal{A}, \nu) \text{ and events }  A_1,\ldots, A_n \in \mathcal{A}\\
  & \text{such that } d_{ij}= \nu(A_i \triangle A_j)  \text{ for all }  1 \leq i <j \leq n.
 \end{align*}
\end{theorem}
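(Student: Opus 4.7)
The plan is to prove the equivalence by a direct round-trip construction, exploiting the fact that a cut vector $\delta(S)$ can be rewritten as an indicator of a symmetric-difference event once we view membership in $S$ as an event.

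For the direction $(2) \Rightarrow (1)$, I would start from the probability space $(\Omega,\mathcal{A},\nu)$ and the events $A_1,\ldots,A_n$, and for each $\omega \in \Omega$ define the random subset $S(\omega) = \{i \in [n] : \omega \in A_i\}$. The key observation is that $\omega \in A_i \triangle A_j$ precisely when exactly one of $i,j$ lies in $S(\omega)$, i.e.\ $|S(\omega) \cap \{i,j\}|=1$, which is exactly $\delta(S(\omega))_{ij}=1$. Integrating against $\nu$ yields
\[
d_{ij} \;=\; \nu(A_i \triangle A_j) \;=\; \int_\Omega \delta(S(\omega))_{ij}\, d\nu(\omega) \;=\; \sum_{S \subseteq [n]} \alpha_S\, \delta(S)_{ij},
\]
where $\alpha_S := \nu(\{\omega : S(\omega) = S\})$. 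Since the $\alpha_S$ are nonnegative and sum to $1$, this exhibits $d$ as a convex combination of cut vectors, hence $d \in \mathrm{CUT}(n)$.

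For the direction $(1) \Rightarrow (2)$, I would reverse this construction. Given $d \in \mathrm{CUT}(n)$, by definition there exist nonnegative coefficients $\{\alpha_S\}_{S \subseteq [n]}$ summing to $1$ with $d = \sum_S \alpha_S\, \delta(S)$. Take $\Omega = 2^{[n]}$ with $\mathcal{A}$ the full power set, and let $\nu$ be the probability measure assigning mass $\alpha_S$ to the point $S$. Define the events $A_i := \{S \subseteq [n] : i \in S\}$ for $i=1,\ldots,n$. Then $S \in A_i \triangle A_j$ iff $|S \cap \{i,j\}|=1$, so
\[
\nu(A_i \triangle A_j) \;=\; \sum_{S : |S \cap \{i,j\}|=1} \alpha_S \;=\; \sum_{S \subseteq [n]} \alpha_S\, \delta(S)_{ij} \;=\; d_{ij},
\]
which is exactly condition (2).

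Neither direction presents a genuine obstacle; both rest on the same bijection $S \leftrightarrow (\mathbf{1}_{i \in S})_{i=1}^n$ between subsets of $[n]$ and $\{0,1\}$-membership patterns, combined with the identity $\delta(S)_{ij} = \mathbf{1}[i \in S] \oplus \mathbf{1}[j \in S]$. The only thing worth being careful about is ensuring the coefficients $\alpha_S$ in the forward direction really do form a probability distribution (nonnegativity and normalization follow from the convex-hull definition of $\mathrm{CUT}(n)$), and, in the backward direction, confirming that the constructed $(\Omega,\mathcal{A},\nu,A_1,\ldots,A_n)$ is a bona fide probability space, which is immediate since $\Omega$ is finite.
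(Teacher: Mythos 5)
Your proof is correct and, for the direction $1)\Rightarrow 2)$, is the identical construction the paper reproduces from Deza--Laurent (sample space $2^{[n]}$ with mass $\alpha_S$ on $S$ and $A_i=\{S: i\in S\}$); your $2)\Rightarrow 1)$ argument via the random subset $S(\omega)$ is the standard converse that the paper defers to the cited source. Both directions check out, including the measurability of $\{\omega: S(\omega)=S\}$ as a finite Boolean combination of the $A_i$.
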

Here $A \triangle B = (A \setminus B) \cup (B \setminus A)$ denotes the symmetric
difference between sets $A$ and $B$.

This theorem is useful for us for the following reason: to every event in a probability space one can assign a Bernoulli random variable, as its indicator. In order to obtain a {\em symmetric} \berndist, the probability of the event has to be equal to 1/2. As we are going to show, that is exactly what happens in the setting of the above theorem.

\subsection{On Theorem~\ref{thm:avis} }

We look into the proof of Theorem \ref{thm:avis}, given in \cite{dezalaurent1997} Proposition 4.2.1., of the part $1) \Rightarrow 2)$ and will show that events $A_i$, as defined in that proof, have to be of probability 1/2.

 Let $d  \in \cutn$, then  $d = \sum_{S \subseteq [n]} \beta_S \delta(S)$ for some $\beta_S \geq 0$ and $\sum \beta_S =1$. Note that 
$\delta(S)=\delta(S^c)$ and that this vector ultimately participates only once in the convex representation  of $d$, so, without loss of generality, one can assume that $\beta_S= \beta_{S^c}$. Now, the probability space $(\Omega, \mathcal{A}, \nu)$ is defined as follows.  Let $\Omega =\{ S: S \subseteq [n]\}$, $\mathcal{A}$ the family of subsets of $\Omega$ and $\nu$ a probability measure, for $A \in \mathcal{A}$ defined by
\[ 
\nu(A)= \sum_{S \in A} \beta_S.
\]
Setting $A_i=\{ S \in \Omega: i \in S\}$ one obtains events with the desired property, namely for which $d_{ij}=\nu(A_i \triangle A_j) $.
In order to calculate $\nu(A_i)$ we observe that 
\begin{align}
\nu(A_i)= \sum_{S \in A_i} \beta_S=  \sum_{S \subseteq [n]} \beta_S \ind(i \in S)
\end{align}
and for its complement $A_i^c=\{ S \in \Omega: i \notin S\}$
\begin{align*}
 \nu(A_i^c)&= \sum_{S \in A_i^c} \beta_S = \sum_{S \subseteq [n]} \beta_S \ind(i \notin S)=\sum_{S \subseteq [n]} \beta_{S} \ind(i \in S^c)\\
 &= \sum_{S \subseteq [n]} \beta_{S^c} \ind(i \in S^c) = \nu(A_i).
\end{align*}
Therefore $\nu(A_i)= 1/2$ for $i=1,\ldots,n$ and we can formulate the following corollary.

\begin{coro}\label{coro:avis2}
For a vector $d \in \R^{E_n}$  the following statements are equivalent
\begin{align*}
1)\, &d \in \cutn \\
2) \, &\text{There exists a probability space } (\Omega, \mathcal{A}, \nu) \text{ and events }  A_1,\ldots, A_n \in \mathcal{A}\\
  & \text{such that } \nu(A_i) = \frac{1}{2}  \text{ for all }  1 \leq i \leq n  \text{ and } d_{ij}= \nu(A_i \triangle A_j)  \text{ for all }  1 \leq i <j \leq n.
 \end{align*}
\end{coro}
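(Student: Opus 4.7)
The plan is that this corollary is essentially an immediate packaging of the computation already carried out in the subsection above. The direction $2) \Rightarrow 1)$ is trivial: any events $A_1,\ldots,A_n$ satisfying the strengthened condition 2) of the corollary (which adds the probability $1/2$ constraint) automatically satisfy the weaker condition 2) of Theorem \ref{thm:avis}, and hence $d \in \cutn$ by that theorem.

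For $1) \Rightarrow 2)$ I would simply collect the construction and computation already displayed before the corollary. Starting from $d \in \cutn$, I would write $d = \sum_{S \subseteq [n]} \beta_S \delta(S)$, and use the identity $\delta(S) = \delta(S^c)$ to symmetrize so that $\beta_S = \beta_{S^c}$ without loss of generality. Then I would take the standard Avis probability space $\Omega = 2^{[n]}$, $\nu(\{S\}) = \beta_S$, and $A_i = \{S \in \Omega : i \in S\}$. Theorem \ref{thm:avis} already provides $d_{ij} = \nu(A_i \triangle A_j)$, so the only additional fact to verify is $\nu(A_i) = 1/2$, which is exactly the short calculation that precedes the corollary: using $\beta_S = \beta_{S^c}$ one obtains $\nu(A_i^c) = \nu(A_i)$, forcing both to equal $1/2$.

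There is no genuine obstacle here since every ingredient has been established; the only thing to take care of is to emphasize that the symmetrization step $\beta_S = \beta_{S^c}$ is where the extra constraint $\nu(A_i) = 1/2$ is bought. If I wanted to be slightly more self-contained, I would also remark briefly on why the symmetrization is free: given any convex representation $d = \sum_S \beta_S \delta(S)$, the averaged coefficients $\tilde\beta_S = (\beta_S + \beta_{S^c})/2$ still form a probability distribution on subsets of $[n]$ and produce the same vector $d$ because $\delta(S) = \delta(S^c)$, so one may replace $\beta$ with $\tilde\beta$ at no cost.
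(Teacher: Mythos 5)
Your proof is correct and follows essentially the same route as the paper: the forward direction reuses the Avis construction with the symmetrized coefficients $\beta_S=\beta_{S^c}$ to force $\nu(A_i)=\nu(A_i^c)=1/2$, and the reverse direction is immediate from Theorem~\ref{thm:avis} since the added probability-$1/2$ constraint only strengthens condition 2). Your explicit remark that the averaging $\tilde\beta_S=(\beta_S+\beta_{S^c})/2$ preserves the representation of $d$ is a small but welcome clarification of the ``without loss of generality'' step that the paper states without justification.
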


\begin{remark}
In the $2) \Rightarrow 1)$ part of the proof of the Theorem \ref{thm:avis}, the probabilities of the events $A_i$ do not play any role, so that  direction of the statement remains true in the above Corollary.
\end{remark}

\section{Bernoulli agreements and CUT*} \label{sec:berncut}
To every event $A$ in a probability space can be associated its {\em indicator} $\ind(A)$,  a random variable that takes binary values: $1$ if the event occurs and $0$ otherwise. The indicator has Bernoulli distribution with parameter that equals the probability of the event $A$.
\vskip2mm
Suppose now that the condition 2) from Corollary \ref{coro:avis2} is satisfied and define $D_i = \ind(A_i)$ for $i =1,\ldots,n$. Clearly $D_i \sim \berndist(1/2)$ and for all $i,j \in E_n$
\begin{align} \label{eq:d}
d_{ij}= \nu(A_i \triangle A_j)=\prob_{\nu}(D_i \neq D_j)=1- \lambda^{\nu}(i,j).
\end{align}
\vskip2mm
From (\ref{eq:d}) and Corollary \ref{coro:avis2} then it follows
\vskip1mm
\begin{align} \label{eq:lambdaincut}
\lambda^{\nu}(i,j) \in CUT^*(n).
\end{align}
\vskip3mm

\begin{propo} \label{propo:cutunif}
$CUT^*(n)$ is a polytope with vertex set $\{\lambda^{\pi_{\bf x}}:  {\bf x} \in \{0,1\}^n\}$.

\end{propo}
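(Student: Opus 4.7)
The plan is to prove the proposition by exhibiting an explicit bijection between the vertices of $\text{CUT}(n)$ and the distributions $\lambda^{\pi_{\bf x}}$, implemented by the switching map ${\bf v} \mapsto {\bf 1} - {\bf v}$ that defines $\text{CUT}^*(n)$.

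First I would compute $\lambda^{\pi_{\bf x}}$ directly. Since $\pi_{\bf x}$ places mass $1/2$ on each of ${\bf x}$ and ${\bf 1 - x}$, a draw $(B_1,\ldots,B_n)$ satisfies $B_i = B_j$ on both points of $D_{\bf x}$ whenever $x_i = x_j$, and $B_i \neq B_j$ on both points whenever $x_i \neq x_j$. Hence
\begin{equation*}
\lambda^{\pi_{\bf x}}(i,j) = \ind(x_i = x_j).
\end{equation*}

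Next I would connect this to $\delta(S)$. Given ${\bf x} \in \{0,1\}^n$, set $S_{\bf x} = \{i : x_i = 1\}$. Then $|S_{\bf x} \cap \{i,j\}| = 1$ exactly when $x_i \neq x_j$, so $\delta(S_{\bf x})_{ij} = \ind(x_i \neq x_j) = 1 - \lambda^{\pi_{\bf x}}(i,j)$. As ${\bf x}$ ranges over $\{0,1\}^n$, the sets $S_{\bf x}$ range over all subsets of $[n]$, which yields the identity of vertex sets
\begin{equation*}
\{\lambda^{\pi_{\bf x}} : {\bf x} \in \{0,1\}^n\} = \{{\bf 1} - \delta(S) : S \subseteq [n]\}.
\end{equation*}

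Finally I would invoke the definition of $\text{CUT}^*(n)$ as the image of $\text{CUT}(n)$ under the switching map ${\bf v} \mapsto {\bf 1} - {\bf v}$ on all coordinates. Because this map is an affine involution of $\R^{E_n}$, it sends $\text{conv}\{\delta(S) : S \subseteq [n]\}$ to $\text{conv}\{{\bf 1} - \delta(S) : S \subseteq [n]\}$ and carries extreme points to extreme points. Combined with the identity above, this gives $\text{CUT}^*(n) = \text{conv}\{\lambda^{\pi_{\bf x}} : {\bf x} \in \{0,1\}^n\}$ with the $\lambda^{\pi_{\bf x}}$ as its vertices (noting that $\pi_{\bf x} = \pi_{\bf 1-x}$ produces the expected $2^{n-1}$ distinct points, matching the vertex count of $\text{CUT}(n)$).

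The only nontrivial step is the first computation of $\lambda^{\pi_{\bf x}}$; once that $\{0,1\}$-valued formula is in hand, everything reduces to reading off Definition~\ref{def:cut} and applying the switching operation coordinatewise. I do not anticipate a real obstacle, since the identification is essentially a relabeling of coordinates ``agreement'' versus ``disagreement.''
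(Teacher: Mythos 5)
Your proposal is correct and follows essentially the same route as the paper's proof: compute $\lambda^{\pi_{\bf x}}(i,j) = \ind(x_i = x_j)$, identify $\lambda^{\pi_{\bf x}} = {\bf 1} - \delta(S_{\bf x})$ via the bijection ${\bf x} \mapsto S_{\bf x}$, and transfer the vertex structure through the switching map. The only cosmetic difference is that you make explicit the (correct) observation that switching is an affine involution preserving extreme points, which the paper leaves implicit.
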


\begin{proof}
  Let ${\bf x}=(x_1,\ldots,x_n) \in \{0,1\}^n$.  Fix $i,j \in [n]$ and consider
  $\lambda^{\pi_{\bf x}}(i,j)$
  \[
  \lambda^{\pi_{\bf x}}(i,j) = \sum_{{\bf y} \in \{0,1\}^n : y_i=y_j} {\pi_{\bf x}}({\bf y}).
  \]
  If $x_i = x_j $, then also $1 - x_i = 1 - x_j$, and the above sum equals to ${\pi_{\bf x}}({\bf x})+{\pi_{\bf x}}({\bf 1-x})= 1$.

  Similarly, if $x_i \neq x_j$, then $1 - x_i \neq 1 - x_j$ and every summand above equals 0.  Hence
  \begin{equation} \label{eq:lambdaind}
  \lambda^{\pi_{\bf x}}(i,j) = \ind(x_i = x_j).
  \end{equation}

  To establish a relation between the agreement vector and a cut vector
  introduce $S_{\bf x}=\{ i \in [n] : x_i=1\}$,  the set of all
  coordinates whose value in ${\bf x}$ is 1.  If exactly one of $i, j \in [n]$ is in $S_{\bf x}$ then $x_i \neq x_j$, otherwise $x_i = x_j$. Therefore, from (\ref{eq:lambdaind}) it follows 
 \[
  \lambda^{\pi_{\bf x}}(i,j)= 1- \ind (x_i \neq x_j)=1- \ind( |S_{\bf x} \cap \{i,j\}|=1) \hskip7mm \forall i,j \in [n]. 
  \]
Applying Definition \ref{def:cut} we get the desired relationship
 \begin{align} \label{eq:lambdadelta}
 \lambda^{\pi_{\bf x}}= {\bf 1}- \delta(S_{\bf x}).
 \end{align}
 Note also that a map from $ \{0,1\}^n $ to $ \mathcal{P}([n])$ (the power set of $[n]$) where $ {\bf x} \mapsto S_{\bf x}$ is a bijection and that every $S \subseteq [n]$ can be identified as $S_{\bf x}$ for some ${\bf x} \in \{0,1\}^n$.
 From the equation (\ref{eq:lambdadelta}) and the definition of cut polytope then we have
  \begin{align*}
  \cutn &= \conv\{\delta(S): S \subseteq [n]\}\\
  &=  \conv\{\delta(S_{\bf x}):  {\bf x} \in \{0,1\}^n\}= 
       \conv\{ {\bf 1}- \lambda^{\pi_{\bf x}}:  {\bf x} \in \{0,1\}^n\}.
  \end{align*}
  Finally, since $\cutstarn$ is a 0/1 switched image of  $\cutn$, it follows that $\cutstarn=\conv\{\lambda^{\pi_{\bf x}}:  {\bf x} \in \{0,1\}^n\}$.
  \vskip3mm
  As every cut vector is a vertex of $\cutn$, then  it follows that 
  $\{\lambda^{\pi_{\bf x}}:{\bf x} \in \{0,1\}^n\}$ must be the vertex set
  of the polytope \cutstarn.
\end{proof}

The next theorem says the set of all agreement vectors (for 
$n$-dim symmetric Bernoulli distributions)  is exactly the known polytope 
$\cutstarn$. Proposition \ref{propo:cutunif} gives a probabilistic interpretation of its extreme points,
and consequently, we are able to completely describe every $\lambda^\mu$.

 \begin{theorem} \label{thm:lambdacut}
  $ \Lambda(\Bn)=\cutstarn.$ 
\end{theorem}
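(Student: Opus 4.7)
The plan is to establish both inclusions $\Lambda(\Bn) \supseteq \cutstarn$ and $\Lambda(\Bn) \subseteq \cutstarn$, exploiting the vertex description of $\cutstarn$ from Proposition~\ref{propo:cutunif} together with Avis's Theorem~\ref{thm:avis}.

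For the inclusion $\cutstarn \subseteq \Lambda(\Bn)$, I would first note that $\Lambda$ is linear in $\mu$, since for fixed $i,j$,
\[
\lambda^\mu(i,j) = \sum_{{\bf y} \in \{0,1\}^n:\ y_i = y_j} \mu({\bf y}).
\]
Given an arbitrary $\lambda \in \cutstarn$, Proposition~\ref{propo:cutunif} lets me write $\lambda = \sum_{{\bf x} \in \{0,1\}^n} \alpha_{\bf x}\, \lambda^{\pi_{\bf x}}$ as a convex combination of vertices. Setting $\mu = \sum_{\bf x} \alpha_{\bf x}\, \pi_{\bf x}$ produces an element of $\Bn$ (by convexity of $\Bn$, noted in the remark after Definition~\ref{def:nbern}), and linearity of $\Lambda$ gives $\Lambda(\mu) = \lambda$.

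For the reverse inclusion, I would take an arbitrary $\mu \in \Bn$ and work on the natural probability space $(\{0,1\}^n, 2^{\{0,1\}^n}, \mu)$ with events $A_i = \{{\bf b} \in \{0,1\}^n : b_i = 1\}$. The Bernoulli condition gives $\mu(A_i) = 1/2$, and
\[
\mu(A_i\triangle A_j) = \prob_\mu(B_i \neq B_j) = 1 - \lambda^\mu(i,j).
\]
By the direction $2) \Rightarrow 1)$ of Theorem~\ref{thm:avis}, the vector $d := {\bf 1} - \lambda^\mu$ lies in $\cutn$. Because $\cutstarn$ is the image of $\cutn$ under the global switching $v \mapsto {\bf 1} - v$ (this is the switching realized by the vertex correspondence $\lambda^{\pi_{\bf x}} = {\bf 1} - \delta(S_{\bf x})$ from the proof of Proposition~\ref{propo:cutunif}), it follows that $\lambda^\mu = {\bf 1} - d \in \cutstarn$.

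The main point to get right is more conceptual than technical: Avis's theorem naturally produces \emph{disagreement} probabilities that sit in $\cutn$, whereas the object at hand is the agreement vector $\lambda^\mu$, and the relation $\lambda^\mu + d = {\bf 1}$ is exactly the global switching that converts $\cutn$ to $\cutstarn$. Once this identification is clear, both halves of the equality follow immediately from results already established.
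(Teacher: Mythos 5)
Your proof is correct and follows essentially the same route as the paper: the inclusion $\cutstarn \subseteq \Lambda(\Bn)$ via convex combinations of the diagonal distributions $\pi_{\bf x}$ using Proposition~\ref{propo:cutunif} and linearity of $\Lambda$, and the reverse inclusion via the $2)\Rightarrow 1)$ direction of Avis's theorem applied to the events $A_i=\{{\bf b}: b_i=1\}$ together with the global switch $\lambda^\mu = {\bf 1}-d$. The paper packages that second half as equation~(\ref{eq:lambdaincut}) derived just before the theorem, but the underlying argument is identical to yours.
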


\begin{proof}
  ($ \Rightarrow$:) Let $\mu$ be a measure in $\Bn$. Then, by (\ref{eq:lambdaincut}), $ \lambda^\mu \in \cutstarn$  and by Proposition \ref{propo:cutunif} follows that 
  $ \lambda^\mu \in \conv\{\lambda^{\pi_{\bf x}}:  {\bf x} \in \{0,1\}^n\}$. Hence 
  \[
  \Lambda(\Bn) \subseteq \cutstarn.
  \]
  \vskip5mm
  ($ \Leftarrow$:) Conversely, suppose that $\lambda \in \cutstarn$ 
  and then we want to show that there is $\mu \in \Bn$ such that 
  $\lambda^\mu = \lambda$.  Recall that for each diagonal distribution
  $\pi_{\bf x}$ there is a unique label $k \in \{1,\ldots,2^{n-1}\}$ such that 
  $\pi_{\bf x} = \pi_k$.  Then write $\lambda$ as a convex linear combination
  of the $\lambda^{\pi_k}$.
 \begin{align*}
   \lambda = \sum_{k=1}^{2^{n-1}} \alpha_k \lambda^{\pi_k}, \quad
     \sum_{k=1}^{2^{n-1}} \alpha_k = 1
 \end{align*} 
  Then the distribution is 
  \[ 
  \mu = \sum_{k=1}^{2^{n-1}} \alpha_k \pi_k.
  \] 
  Being a convex combination of  $(\pi_1,\ldots, \pi_{2^{n-1}}) \in \Bn$, $\mu$ is also in  $\Bn$.  Fix $i,j \in \{1,\ldots,n\}$ and 
consider the probability that for $(B_1,\ldots,B_n) \sim \mu$, that $B_i = B_j$.  Since $\mu$ is a convex mixture of other measures,
\[
\lambda^{\mu}(i,j) = \sum_{k=1}^{2^{n-1}} \alpha_k \lambda^{\pi_k}(i,j) 
  = \lambda(i,j).
\]
\end{proof}
   
  The original motivation was to understand the attainable correlation
  matrices.  Recall the correlation mapping $R$: $ \mu \mapsto \rho^\mu$,
  and  recall the relation between agreement probability and the 
  correlation, which is obviously a linear bijection: 
  \[
  R({\mu})= 2\lambda^{\mu}- \mathbf{1}.
  \]
  Our original goal was to describe the set $R(\Bn) = 2\Lambda(\Bn) - {\mathbf{1}}$.
  Since $\Lambda(\Bn)$ has extreme points corresponding to $\lambda^{\pi_i}$,
  so does $R(\Bn)$, proving Theorem~\ref{THM:corr}.
  
  \begin{remark}
  From Proposition \ref{propo:piext} we know that for every ${\bf x } \in \{0,1\}^n$, $\pix$ is an extreme point of $\Bn$. Since $R$ is an affine transformation we also know that $\text{ext}(R(\Bn)) \subseteq R(\text{ext}(\Bn))$. Due to Theorem \ref{thm:lambdacut}, even we do not have a complete description of $\Bn$ (i.e we do not know all its extreme points) we are still able to completely describe $R(\Bn)$.
  \end{remark}
  Note also that for $\delta \in [0,1]^{E_n}$,  $\delta \in \text{CUT}(n) \Leftrightarrow {\mathbf{1}}- \delta \in CUT^*(n)$. Now Theorem \ref{THM:polyrel} follows directly from Theorem \ref{thm:lambdacut}.

  \subsection{Relation to the correlation polytope}  
  The polytope $R(\Bn)$ does describe
  the set of allowable correlation matrices, but it is not directly 
  a {\em correlation
  polytope}, which is typically defined (see~\cite{fiorinimptdw2012}) as
  \[
  \operatorname{COR}(n) = \text{conv}\{bb^T \in \real^{n\times n}|b \in \{0,1\}^n\}.
  \]
   However, since there is an affine isomorphism between CUT($n$) and COR($n-1$) (see \cite{dezalaurent1997}) they can be related in a similar way as we did here. 
   
   \subsection{Asymmetric $n$-variate Bernoulli}
It should be noted that discovered relation between CUT($n$) and $\Bn$ does not extend to asymmetric multivariate Bernoulli distributions. 
It is enough to analyze the bivariate case with equal marginals. 

\par The correlation between two \berndist($p$) random variables  belongs to the interval $[\rho_{\min},1]$. Maximum correlation in case of equal marginals, always equals to 1 and the minimum correlation $\rho_{\min}$ can be calculated using Fr\'echet-Hoeffding bounds~\cite{frechet1951,hoeffding1940} 
\begin{align*}
\rho_{\min{}}=  \left \{ \begin{array}{ll}
 -(1-p)/p, & \mbox{for } p \geq 1/2 \\
  -p/(1-p), & \mbox{for } p \leq 1/2.
\end{array} \right.
\end{align*}
It is clear now that only for $p= 1/2$, $\rho_{\min}=-1$ and possible correlations equal to the entire interval $[-1,1]$, while for any other value of $p$ it is a strict subinterval of $[-1,1]$. From the linear relationship between the agreement probability $\lambda$ and $\rho$, it follows that, again only for $p=1/2$ it is true $0\leq \lambda \leq 1$, while for other values of $p$ these bounds are not sharp. For example, for $p=3/4$, $-1/3 \leq \rho \leq 1$, and $ 1/3 \leq \lambda \leq 1$.\\
In two dimensional case the cut polytope is known to be $\cut(2) =[0,1]= \cut^*(2)$ so it corresponds to $\Lambda(\Bn)$ only in the symmetric case.

\section{Applications}\label{sec:applications}

\subsection{Determining feasibility}

Once the vertices of the polytope $R(\Bn)$ have been determined, it is 
straightforward using linear programming to determine if a particular 
correlation vector $\rho: E_n \to [-1,1]$ is realizable.  

For $k \in 1,\ldots,2^{n-1}$, let $x_k(i)$ be the $i$th bit in the 
binary representation of the number $k - 1$.  Then let
\[
R(\pi_k)(i,j) = 2\ind(x_k(i) = x_k(j)) - 1.
\]
The goal is to find nonnegative $\alpha_1,\ldots,\alpha_{2^{n-1}}$ such that, for all $i < j$
\[
\alpha_1 R(\pi_1)(i,j) + \cdots + \alpha_{2^{n-1}}R(\pi_{2^{n-1}})(i,j) = \rho(i,j)
\]
and $\alpha_1 + \cdots + \alpha_{2^{n-1}} = 1$.

Let $\rho^{\text{aug}} =[ \rho ~~1]$. These $\binom{n}{2} + 1$ 
equations can be written as the system $M \alpha = \rho^{\text{aug}}$ 
subject to $\alpha \geq 0$ for
appropriate choice of $M$ and $\rho^{\text{aug}}$.  It can be determined
if this polytope is nonempty by solving the {\em Phase I} linear program
\begin{align*}
\min z_1 + \cdots + z_{2^{n-1}} &   \\
  M\alpha + z &= \rho^{\text{aug}} \\
  \alpha, z \geq 0.
\end{align*}

This linear program has a feasible solution of $\alpha = 0$ and 
$z = \rho^{\text{aug}}$.  
Then the original polytope is nonempty if and only if this linear program
has a solution with objective function value $0$.

The free lpsolve package within R was used to test how possible this
would be.  On a problem with $n = 15$ it took roughly eight seconds 
to solve on an Intel i5@3.30 Ghz.  Of course the number of variables is
growing exponentially with $n$, so this method will grow rapidly as
$n$ grows.

\subsubsection{\bf Generating multivariate \berndist(1/2) with given correlation vector $\rho$}

The vector $(\alpha_1,\ldots,\alpha_{2^{n-1}})$ obtained above, using linear programming, can be used to sample $(B_1,\ldots, B_n)$ with desired correlation in the following way. Let us label the vertices of $n$-dimensional cube $[0,1]^n$ whose first coordinate is 0:
\[
{\bf v}_k := \{ {\bf y} \in \{0,1\}^n : \pi_k({\bf y})=1/2 ~\text{and}~ {\bf y}(1)=0 \}. 
\]
Then for $k=1,\ldots, {2^{n-1}}$
\[
 (B_1,\ldots, B_n)= {\bf v}_k ~\text{or}~\mathbf{1}-{\bf v}_k ~\text{with probability}~ \alpha_k/2.
\]

\subsection{Example}  To illustrate how the above algorithm can be used for general margins, 
consider the following problem with $n = 4$.  Suppose 
$X_1$ is uniform over $[0,1]$, $X_2$ is exponential with mean 2, 
$X_3$ equals 1 with probability 0.3 and 4 with probability 0.7, and 
$X_4$ is a standard normal random variable.  The goal is to generate
$(X_1,X_2,X_3,X_4)$ from a 
multivariate distribution such that they have correlation matrix
\[
\Sigma = \begin{pmatrix}
  1 & 0.2 & -0.1 & -0.4 \\
  0.3 & 1 & -0.4 & 0.3 \\
  -0.2 & 0 & 1 & -0.2 \\
  -0.4 & 0.3 & -0.2 & 1 
\end{pmatrix}
\]

The first thing to do is to check if this correlation matrix is even 
possible with these marginals.  Recall that if a random variable
$X$ has cdf $F_X$, and $F^{-1}(u) =  \inf\{a:F_X(a) \geq u\}$, then 
for $U \sim \unifdist([0,1])$, we have that $F_X^{-1}(U) \sim X$.  
This is called the inverse transform method of converting uniforms into
variables with the same distribution as $X$.

Therefore, if a random vector $(U_1,U_2,U_3,U_4)$ is drawn where
the $U_i$ are marginally uniform, then 
$(F_{X_1}^{-1}(U_1),F_{X_2}^{-1}(U_2),F_{X_3}^{-1}(U_3),F_{X_4}^{-1}(U_4))$ 
has the correct marginals for the $X_i$.

Note that if $U$ is uniform over $[0,1]$, then so is $1 - U$.  The
variables $U$ and $1 - U$ are said to be {\em antithetic} variates.
With that in mind, one way to draw $(U_1,U_2,U_3,U_4)$ is to 
set $U_i = U$ or $U_i = 1 - U$ for each $i$.

This is where a symmetric Bernoulli random vector $(B_1,B_2,B_3,B_4)$
enters the picture.  For each $i$, if 
$B_i = 1$ then $U_i = U$ and $X_i = F_{X_i}^{-1}(U)$, and if $B_i = 0$ then 
$U_i = 1 - U$ and $X_i = F_{X_i}^{-1}(1-U)$.  In other words,
\[
X_i = B_i F_{X_i}^{-1}(U) + (1 - B_i) F_{X_i}^{-1}(1 - U).
\]

In this way, the problem of generating
$(X_1,X_2,X_3,X_4)$ with the correct marginals and correlations
can be transformed into
a problem of generating $(B_1,B_2,B_3,B_4)$ with symmetric Bernoulli marginals
and perhaps different correlations. This idea was explored thoroughly in \cite{huberm2015}.

The Fr\'echet-Hoeffding bound~\cite{frechet1951,hoeffding1940} 
then says that the correlation between $X_i$ and $X_j$ is maximized
when this method is used with $B_i = B_j$ and minimized with $B_i = 1 - B_j$.
For $X_2$, $F_{X_2}^{-1}(u) = -2\ln(1-u)$.  For $X_3$, 
$F_{X_3}^{-1}(u) = \ind(u \leq 0.3) + 4 \cdot \ind(u > 0.3)$.  A straightforward
calculation then 
gives the maximum and minimum possible correlation between $X_2$ and $X_3$ (see e.g. Whitt~ \cite{whitt1976}). 
\begin{align*}
\corr(F_{X_2}^{-1}(u),F_{X_3}^{-1}(u)) &= 0.544828\ldots \\
\corr(F_{X_2}^{-1}(u),F_{X_3}^{-1}(1-u)) &= -0.78818\ldots.
\end{align*}
Since the target correlation of $-0.4$ lies within
$[-0.78818,0.544828]$, it is possible to achieve.  

Because $-0.4$ falls in the interval, it 
can be written as a convex linear combination
of $-0.78818$ and $0.544828$:
\[
-0.4 = -0.78818(1-0.291209)+.544828(0.291209).
\]

This means that we want a 0.291209 chance that $U_2 = U_3$, and 
a $1 - 0.291209$ chance that $U_2 \neq U_3$, or equivalently, a  
0.291209 chance that $B_2 = B_3$ and 
a $1 - 0.291209$ chance that $B_2 \neq B_3$.

Let $B_2$ and $B_3$ be symmetric Bernoulli random
variables with a 0.291209 probability of being equal.  Then they have a 
correlation of $2(0.291209) - 1 = -0.417582.$  Performing this
calculation for all the different pairs of random variables gives
a correlation matrix for the symmetric Bernoulli random variables $(B_1,B_2, B_3, B_4)$ of
\[
\Sigma_B = \begin{pmatrix}
  1 & 0.230940 & -0.125988 & -0.409330 \\
  0.230940 & 1 & -0.417582 & 0.332154  \\
  -0.125988 & -0.417582 & 1 & -0.263598 \\
  -0.409330 & 0.332154 & -0.263598 & 1
  \end{pmatrix}.
\]

This correlation matrix (i.e. correlation vector $\rho$ = (0.230940, -0.125988, -0.409330, -0.417582, 0.332154, -0.263598)) corresponds to a linear program which was
solved using lpsolve in R, and the solution was found to be 
$(\alpha_1,\ldots,\alpha_8)$, which to four significant figures is 
\[
 \begin{pmatrix} 0.04337 & 0.1284 & 0.2450 & 0.1985 & 
  0.006894 & 0.2582 & 0 & 0.1193 \end{pmatrix}.
\]

Consider the sixth component $0.2582$.  
Since $6 - 1 = 5$ has binary expansion
$0101$, this means that there is a $0.2582$ chance that $U_2 = U_4 = U$ 
and $U_1 = U_3 = 1-U$.
Note that both $(B_1,B_2,B_3,B_4) = (0,1,0,1)$ and
$(B_1,B_2,B_3,B_4) = (1,0,1,0)$ give rise to $U_1 = U_3$ and $U_2 = U_4$.  
So in order to make a symmetric 
Bernoulli distribution, there is equal probability of either of 
those two vectors occurring.  That is,
$\prob((B_1,B_2,B_3,B_4) = (0,1,0,1)) = 
 \prob((B_1,B_2,B_3,B_4) = (1,0,1,0)) = 0.2582/2.$

This algorithm  for drawing a multivariate distribution for 
this example can then be given as follows.
\begin{enumerate}
  \item Draw $U$ uniformly over $[0,1]$.  Draw $Y$ so that 
  $\prob(Y = j) = \alpha_j,~j=1,\ldots,8$.
  \item Write $Y-1$ in binary notation.  That is, 
       find $b_i \in \{0,1\}$ so that $Y - 1 = \sum_{i=1}^{4} 2^{i-1} b_i$.
  \item For each $i \in \{1,\ldots,4\}$ if $b_i = 1$ then let $U_i = U$,
        otherwise let $U_i = 1 - U$.
  \item Let $X_1 = U_1$, $X_2 = -2\ln(1-U_2)$, $X_3 = \ind(U_3 \leq 0.3) + 4
        \cdot \ind(U_3 > 0.3)$,
        $X_4 = \Phi^{-1}(U_4)$.
\end{enumerate}

Here $\Phi$ is used in the usual fashion as the cdf of a standard normal.

\section{Discussion}\label{sec:discussion}
 The set of $n \times n$ correlation matrices, the elliptope $\mathcal{E}_{n \times n}$ is a nonpolyhedral convex set with a nonsmooth boundary. The extreme points of the elliptope have not been explicitly determined, but there exist characterization results on the rank one and two extreme points, done by Ycart \cite{ycart1985} (see also Li and Tam \cite{li1994note} and Parthasarathy \cite{parthasarathy2002}).
 Laurent and Poljak \cite{laurent1995}  proved that cut matrices (analogous to cut vectors) are actually vertices (that is, extreme points of rank one) of the elliptope  and that $\mathcal{E}_{n \times n}$ can be seen as a nonpolyhedral relaxation of the cut polytope. In view of theorems proved here it is clear that the vertices of $\mathcal{E}_{n \times n}$ correspond precisely to symmetric Bernoulli correlations. 

\subsection*{Acknowledgments} 
We are grateful to G\'erard Letac for sharing his ideas with us and for inspiring discussions. 

\bibliographystyle{plain}
\bibliography{Berncut}


%
%
%
%
%
%
%

\end{document}